\def\smallint{\begingroup\textstyle \int\endgroup}
\newtheorem*{theorem*}{Theorem}
\newtheorem*{definition*}{Definition}
\newtheorem*{proposition*}{Proposition}
\newtheorem{theorem}{Theorem}[section]
\newtheorem{proposition}[theorem]{Proposition}
\newtheorem{definition}[theorem]{Definition}
\newtheorem{example}[theorem]{Example}
\newtheorem{remark}[theorem]{Remark}
\newtheorem{corollary}[theorem]{Corollary}
\newtheorem{lemma}[theorem]{Lemma}
\DeclareRobustCommand{\intsmall}{\text{\reflectbox{$\smallint$}}}
\newcommand\blfootnote[1]{
  \begingroup
  \renewcommand\thefootnote{}\footnote{#1}
  \addtocounter{footnote}{-1}
  \endgroup
}
\begin{document}
\title{On lax limits in $\infty$-categories}
\author{John D. Berman}
\maketitle

\begin{abstract}\blfootnote{This work was supported by a National Science Foundation Postdoctoral Fellowship under grant 1803089.}
We introduce partially lax limits of $\infty$-categories, which interpolate between ordinary limits and lax limits. Most naturally occurring examples of lax limits are only partially lax; we give examples arising from enriched $\infty$-categories and $\infty$-operads. Our main result is a formula for partially lax limits and colimits in terms of the Grothendieck construction. This generalizes a formula of Lurie for ordinary limits and of Gepner-Haugseng-Nikolaus for fully lax limits.
\end{abstract}

\section{Introduction}
\noindent Many notions in ordinary category theory can be described in terms of lax limits in 2-categories: Grothendieck constructions, comma categories, Kleisli categories, and so on. In theory, this means that a great deal of category theory reduces to the study of lax limits.%Citations from ordinary category theory

Gepner-Haugseng-Nikolaus \cite{GHN} have defined lax limits in $(\infty,2)$-categories, and they have proven that the Grothendieck construction is an example:

\begin{theorem}[\cite{GHN} 1.1]\label{ThmGHN}
Let $F:I\to\text{Cat}_\infty$ be a functor with associated cocartesian fibration $\smallint F\to I$ and associated cartesian fibration $\intsmall F\to I$. Then $\smallint F$ is equivalent to the lax colimit of $F$, and $\intsmall F$ is equivalent to the oplax colimit of $F$.
\end{theorem}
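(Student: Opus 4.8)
The plan is to match the Grothendieck construction with the (op)lax colimit as the latter is defined, via a weighted colimit in the $(\infty,2)$-category $\text{Cat}_\infty$. I will carry out the cocartesian case, showing $\smallint F$ is the lax colimit; the cartesian statement for $\intsmall F$ is formally dual, replacing the coslices $I_{i/}$ below by the slices $I_{/i}$. First I would unwind the definition of the lax colimit of $F\colon I\to\text{Cat}_\infty$ as a colimit weighted by the functor $W\colon I^{op}\to\text{Cat}_\infty$, $i\mapsto I_{i/}$. Since $I$ carries no nontrivial $2$-morphisms, this coslice weight is exactly the universal shape of a lax cocone: the weighted-colimit universal property identifies functors $\mathrm{colim}^W F\to C$ with natural transformations $W\Rightarrow \mathrm{Fun}(F(-),C)$ of functors $I^{op}\to\text{Cat}_\infty$, and such a transformation is precisely a choice of functors $\alpha_i\colon F(i)\to C$ together with, for each $f\colon i\to j$, a not-necessarily-invertible $2$-morphism $\alpha_i\Rightarrow\alpha_j\circ F(f)$ satisfying coherence. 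Feeding this into the coend formula for weighted colimits reduces the theorem to producing an equivalence
\[
\int^{i\in I} I_{i/}\times F(i)\;\simeq\;\smallint F
\]
over $I$.

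The second step is to establish this coend formula. The essential point is that $\smallint F$ is the cocartesian unstraightening of $F$, and unstraightening is a pointwise left Kan extension, so it is computed by precisely such a coend. I would prove the equivalence by reduction to generators: both sides are functors of $F\in\mathrm{Fun}(I,\text{Cat}_\infty)$ that preserve colimits in the variable $F$, so it suffices to check it when $F$ is corepresentable, say $F=\mathrm{Map}_I(i_0,-)$. In that case the right-hand side is the left fibration $I_{i_0/}\to I$, while the left-hand side is $\int^{i} I_{i/}\times\mathrm{Map}_I(i_0,i)$, which the co-Yoneda lemma also identifies with $I_{i_0/}$; a check that the two identifications agree with the projections to $I$ finishes this case, and colimit-preservation propagates it to all $F$.

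The main obstacle is the final upgrade: the argument above produces an equivalence of underlying $\infty$-categories over $I$, but the statement is about lax colimits in the $(\infty,2)$-category $\text{Cat}_\infty$, so one must check the comparison respects the full $\text{Cat}_\infty$-enrichment, faithfully recording the \emph{non-invertible} $2$-morphisms that distinguish a lax colimit from an ordinary one. Concretely, a morphism $(i,x)\to(j,y)$ of $\smallint F$ lying over $f\colon i\to j$ is exactly a morphism $F(f)(x)\to y$ in $F(j)$, and transport along the cocartesian edge over $f$ exhibits the comparison $2$-cell $\alpha_i\Rightarrow\alpha_j\circ F(f)$ of the first paragraph; one must verify that this assignment is coherently compatible with composition and detects non-invertibility. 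This forces the computation to be carried out in an explicit model for the $(\infty,2)$-category $\text{Cat}_\infty$ — for instance scaled or marked simplicial sets — in which the cocartesian edges of $\smallint F$ are exactly the distinguished $2$-cells, and I expect the verification that the coend comparison is an equivalence of such enriched objects to be the most delicate part of the proof. With the cocartesian/lax case in hand, the cartesian/oplax statement follows by dualizing throughout.
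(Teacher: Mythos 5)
A framing point first: the paper contains no proof of Theorem \ref{ThmGHN} at all. It is imported from Gepner--Haugseng--Nikolaus \cite{GHN} and then used as an ingredient (via Remark \ref{RmkFully}) in the proof of the paper's own generalization, Theorem \ref{ThmLaxCat}. So your attempt can only be compared with the proof in \cite{GHN}, and there your outline does follow the standard route: the lax colimit is by definition the colimit weighted by the coslices $I_{i/}$, computed as the coend over $\text{Tw}(I)$ (exactly Definition \ref{DefLax} with the flat marking), and one identifies this coend with the unstraightening by compatibility with colimits in the variable $F$ together with the computation on generators, where $\smallint\text{Map}_I(i_0,-)\simeq I_{i_0/}$ matches the co-Yoneda evaluation of the coend.

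That said, there are genuine gaps as written. (i) Your generating class is too small: the corepresentables $\text{Map}_I(i_0,-)$ generate only the space-valued functors under colimits, because the inclusion $\text{Top}\subseteq\text{Cat}_\infty$ is a left adjoint (to the core) and hence colimits of space-valued functors, computed pointwise, remain space-valued; a constant functor at $\Delta^1$ is already not in their closure. You must enlarge the generators to $\text{Map}_I(i_0,-)\times\Delta^n$ and check that both sides commute with $-\times K$ for a constant $\infty$-category $K$ --- routine, but necessary, and omitted. (ii) The claim that the Grothendieck construction preserves colimits in $F$ is the actual technical heart of the theorem, not a formality: straightening is an equivalence onto $\text{coCart}(I)$, whose morphisms are required to preserve cocartesian edges, and one must still prove that the further forgetful functor to $\text{Cat}_\infty$ (or $\text{Cat}_{\infty/I}$) interacts correctly with colimits; ``unstraightening is a pointwise left Kan extension'' is a slogan, not an argument, and this is precisely where \cite{GHN} invest their effort (free fibrations, mapping $\infty$-categories out of $\smallint F$). (iii) Your final paragraph addresses a non-problem for the theorem as stated: in both \cite{GHN} and this paper the (op)lax colimit is \emph{defined} by the weighted-colimit/coend formula internal to the $\infty$-category $\text{Cat}_\infty$, so no separate $(\infty,2)$-enrichment verification in scaled or marked models is required; that issue arises only if one wants the genuinely $2$-categorical universal property, which \cite{GHN} treat as a separate matter. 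One more slip: in your dualization the weight for the \emph{oplax} colimit is the opposite coslice $(I_{i/})^{\text{op}}\simeq(I^{\text{op}})_{/i}$, as in Definition \ref{DefLax}, not the slice $I_{/i}$ --- the latter is covariant in $i$, hence cannot serve as a colimit weight at all (it is the weight for lax \emph{limits}); relatedly, the cartesian fibration $\intsmall F$ naturally lives over $I^{\text{op}}$.
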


\noindent In this short paper, we will generalize the notion of lax limit to encompass many \emph{other} constructions in higher category theory. If $\mathcal{C}$ is an $(\infty,2)$-category, $I$ is an $\infty$-category with some morphisms marked, and $F:I\to\mathcal{C}$ is a functor, we will define a \emph{partially lax colimit} $\text{colim}^\text{lax}(F)$, which satisfies the following universal property:
\begin{itemize}
\item there are morphisms $\alpha_i:F(i)\to\text{colim}^\text{lax}(F)$ for each $i\in I$;
\item there are 2-morphisms $\alpha_\phi:\alpha_i\Rightarrow\alpha_j F(\phi)$ for each $\phi:i\to j$ in $I$;
\item $\alpha_\phi$ is an equivalence whenever $\phi$ is marked.
\end{itemize}

\noindent In particular, any time we write $\text{colim}^\text{lax}(F)$, we are implicitly referencing a marking on the domain of $F$.

There are also some variants on this idea. If we change the direction of the 2-morphisms $\alpha_\phi$, we obtain an \emph{oplax} colimit. If we change the direction of the morphisms $\alpha_i$, we obtain lax and oplax \emph{limits}.

If \emph{all} morphisms of $I$ are marked, then all the 2-morphisms are required to be equivalences, and we recover the ordinary colimit. If \emph{no} morphisms of $I$ are marked (or if only the equivalences are marked), then none of the 2-morphisms are required to be equivalences, and we recover the fully lax colimit.

The author hopes that anyone interested in lax limits will also be interested in partially lax limits, for the following simple reason:

\begin{center} Most lax limits appearing `in nature' are only partially lax.\end{center}

\noindent In this paper, we offer the following two examples as partial justification, with the expectation that more will follow:
\begin{itemize}
\item (Enriched $\infty$-categories) If $\mathcal{V}$ is a monoidal $\infty$-category, there is an $\infty$-category $\text{Cat}^\mathcal{V}$ of $\mathcal{V}$-enriched categories. This is constructed (as by Gepner-Haugseng \cite{GH}) in two steps: First, for each set $S$, we construct $\infty$-categories $\text{Cat}^\mathcal{V}_S$ of $\mathcal{V}$-enriched categories with a fixed set $S$ of objects. These assemble into a functor $\text{Cat}^\mathcal{V}_{-}:\text{Set}^\text{op}\to\text{Cat}_\infty$. Then $$\text{Cat}^\mathcal{V}\cong\text{colim}^\text{oplax}(\text{Cat}^\mathcal{V}_{-}),$$ where $\text{Set}^\text{op}$ is marked by surjections.
\item ($\infty$-operads) Any symmetric monoidal $\infty$-category $\mathcal{C}^\otimes$ induces a functor from the category of finite pointed sets $$\mathcal{C}:\text{Fin}_\ast\to\text{Cat}_\infty,$$ where $\mathcal{C}\left\langle n\right\rangle=\mathcal{C}^{\times n}$, and the maps in $\text{Fin}_\ast$ describe the monoidal operation. If $\mathcal{O}$ is an $\infty$-operad, so that it comes with a functor to $\text{Fin}_\ast$, then there is an equivalence of $\infty$-categories $$\text{Alg}_\mathcal{O}(\mathcal{C})\cong\text{lim}^\text{lax}(\mathcal{O}\to\text{Fin}_\ast\xrightarrow{\mathcal{C}}\text{Cat}_\infty),$$ where $\mathcal{O}$ is marked by its inert morphisms.
\end{itemize}

\noindent To understand these examples, we need a way to compute lax limits in $\text{Cat}_\infty$. This is the subject of our main result:

\begin{theorem*}[Theorem \ref{ThmLaxCat}]
If $I$ is a marked $\infty$-category and $F:I\to\text{Cat}_\infty$, let $p:\smallint F\to I$ (respectively $q:\intsmall F\to I$) be the associated cocartesian (or cartesian) fibration. We say that a morphism of $\smallint F$ (or $\intsmall F$) is marked if it is $p$-cocartesian (or $q$-cartesian) and lies over a marked morphism in $I$. Then:
\begin{itemize}
\item the lax colimit is the localization of $\smallint F$ at marked morphisms;
\item the oplax colimit is the localization of $\intsmall F$ at marked morphisms;
\item the lax limit is the $\infty$-category of marked sections of $p:\smallint F\to I$;
\item the oplax limit is the $\infty$-category of marked sections of $q:\intsmall F\to I$.
\end{itemize}
\end{theorem*}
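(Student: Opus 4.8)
The plan is to verify the four formulas by comparing universal properties, using Theorem~\ref{ThmGHN} to dispose of the fully lax case and then carving out the partially lax case by an invertibility condition on the marked $2$-morphisms. Since $\text{Cat}_\infty$ is cartesian closed, the $(\infty,2)$-categorical mapping object into any $X\in\text{Cat}_\infty$ is just the functor $\infty$-category $\text{Fun}(-,X)$, so by the Yoneda lemma it suffices to produce, for each formula and each $X$, an equivalence of $\infty$-categories between $\text{Fun}(-,X)$ applied to the proposed model and the $\infty$-category of partially lax cocones (resp. cones) from $F$ to $X$, naturally in $X$.

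For the lax colimit, write $W$ for the class of marked morphisms of $\smallint F$. Theorem~\ref{ThmGHN} gives a natural equivalence $\text{Fun}(\smallint F,X)\simeq\text{Cocone}^{\text{lax}}(F,X)$ with the $\infty$-category of fully lax cocones. The key observation is that, under this equivalence, the $2$-morphism $\alpha_\phi$ attached to a functor $G:\smallint F\to X$ is computed by applying $G$ to the $p$-cocartesian lift of $\phi$ at each object; consequently $\alpha_\phi$ is an equivalence for every marked $\phi$ if and only if $G$ inverts every morphism in $W$. Thus the full subcategory of partially lax cocones corresponds to the full subcategory $\text{Fun}^{W}(\smallint F,X)\subseteq\text{Fun}(\smallint F,X)$ of $W$-inverting functors, which by the universal property of localization is $\text{Fun}(\smallint F[W^{-1}],X)$. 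Yoneda then identifies $\text{colim}^{\text{lax}}(F)$ with the localization $\smallint F[W^{-1}]$. The oplax colimit is identical after replacing the cocartesian fibration $p:\smallint F\to I$ by the cartesian fibration $q:\intsmall F\to I$ and reversing the $2$-morphisms.

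For the lax limit, I would first identify $\text{Fun}(X,\text{Sect}(p))$ for the $\infty$-category $\text{Sect}(p)=\text{Fun}_I(I,\smallint F)$ of all sections. Since $\text{Fun}(X,-)$ is a right adjoint and hence preserves the pullback defining $\text{Fun}_I(I,\smallint F)$, currying gives $\text{Fun}(X,\text{Sect}(p))\simeq\text{Fun}_I(X\times I,\smallint F)$, and by the standard unstraightening identification of functors over $I$ into a cocartesian fibration with lax cones (the lax-limit analogue of Theorem~\ref{ThmGHN}) this is the $\infty$-category $\text{Cone}^{\text{lax}}(X,F)$ of fully lax cones. A section is marked precisely when it carries each marked $\phi$ to a $p$-cocartesian morphism, and unwinding the previous identification this corresponds exactly to the lax cones whose $2$-morphism $\beta_\phi$ is an equivalence for marked $\phi$, i.e. to partially lax cones. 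Hence $\text{Fun}(X,\text{Sect}^{\text{marked}}(p))\simeq\text{Cone}^{\text{lax}}_{\text{marked}}(X,F)$ naturally in $X$, and Yoneda gives $\text{lim}^{\text{lax}}(F)\simeq\text{Sect}^{\text{marked}}(p)$. The oplax limit is the dual statement for $q:\intsmall F\to I$.

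The main obstacle is the identification, in each case, of the marked $2$-morphism with a (co)cartesian morphism over the marked arrow, upgraded to an equivalence of \emph{full subcategories} rather than a mere bijection on objects: one must check that the equivalence of Theorem~\ref{ThmGHN} is natural enough that the invertibility condition matches on morphisms as well, so that $\text{Cocone}^{\text{lax}}_{\text{marked}}$ and $\text{Fun}^{W}$ agree as full subcategories of $\text{Cocone}^{\text{lax}}$ and $\text{Fun}(\smallint F,X)$. A secondary point to handle with care is that the universal property of the localization $\smallint F[W^{-1}]$ must be read $(\infty,2)$-categorically; this is exactly what cartesian closedness of $\text{Cat}_\infty$ supplies, since it promotes the defining equivalence of mapping spaces to an equivalence of mapping $\infty$-categories. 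Once these compatibilities are in place, the four formulas follow uniformly from Theorem~\ref{ThmGHN} and the universal property of localization.
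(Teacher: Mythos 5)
Your strategy---treat the four formulas as representability statements, reduce to Theorem \ref{ThmGHN} via $\text{Fun}(-,X)$, and carve out the partially lax case by a $W$-inverting condition---is the natural ``universal property'' route, but as written it has a genuine gap: it never engages with the paper's actual definition of the partially lax (co)limit. In this paper $\text{colim}^\text{lax}(F)$ and $\text{lim}^\text{lax}(F)$ are \emph{defined} (Definition \ref{DefLax}) as a coend/end over $\text{Tw}(I)$ weighted by the localizations $|I^\dag_{-/}|$ and $|I^\dag_{/-}|$; the bulleted ``universal property'' in the introduction is informal motivation, not a definition. Your Yoneda argument quantifies over ``the $\infty$-category of partially lax (co)cones from $F$ to $X$,'' an object that neither you nor the paper ever defines: in the $\infty$-categorical setting, making lax cocones with invertibility conditions on selected $2$-morphisms precise is exactly the hard part, and the only rigorous handle available is the weighted formula itself. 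Consequently your identification $\text{Fun}(\smallint F,X)\simeq\text{Cocone}^\text{lax}(F,X)$, and the subsequent matching of full subcategories, is either circular (if lax cocones are defined via $\text{Fun}(\smallint F,X)$) or unproved. You do flag this yourself as ``the main obstacle''---that the GHN equivalence identifies the $2$-morphism $\alpha_\phi$ with images of $p$-cocartesian lifts, naturally and on the level of full subcategories---but deferring it is not an option: that compatibility is essentially the content of the theorem, not a routine check.

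The paper's proof avoids (co)cones entirely and computes the weighted formula directly, and this is where the two approaches genuinely diverge. For colimits: since $|-|:\text{Cat}^\dag_\infty\to\text{Cat}_\infty$ preserves finite products, the weight diagram factors as $|-|$ applied to a $\text{Tw}(I)$-diagram of \emph{marked} categories $I^\dag_{i/}\times F(j)^\flat$; Proposition \ref{PropMarkLim} identifies the colimit of that marked diagram as the fully lax colimit $\smallint F$ (Remark \ref{RmkFully}) equipped with precisely the marking $\smallint F^\dag$, and since $|-|$ is a left adjoint it commutes with the colimit, yielding $|\smallint F^\dag|$. For limits: the equivalence $\text{Fun}(|I^\dag_{/i}|,F(j))\simeq\text{Fun}^\dag(I^\dag_{/i},F(j)^\flat)$ exhibits the partially lax weight diagram as a levelwise full subcategory of the fully lax one, and the paper's lemma on limits of levelwise fully faithful transformations (proved via sections of the fibration $\smallint F^\natural\to I$) identifies $\text{lim}^\text{lax}(F)$ as the full subcategory of $\text{Fun}_{/I}(I,\smallint F)$ on sections carrying marked morphisms to cocartesian ones. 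If you want to salvage your representable approach, the repair is to run the paper's computation inside $\text{Fun}(-,X)$: from Definition \ref{DefLax} one gets $\text{Fun}(\text{colim}^\text{lax}(F),X)\simeq\lim_{\text{Tw}(I)^\text{op}}\text{Fun}^\dag\bigl(I^\dag_{i/},\text{Fun}(F(j),X)^\flat\bigr)$, which can then be compared with $\text{Fun}^\dag(\smallint F^\dag,X^\flat)$---but at that point you are reproving Proposition \ref{PropMarkLim} in dual form, and the Yoneda shortcut has bought you nothing over the paper's direct argument.
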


\noindent In the event that no morphisms in $I$ are marked, this theorem reduces to Theorem \ref{ThmGHN}.

Descotte, Dubuc, and Szyld \cite{DDS} have recently studied partially lax limits (which they call $\sigma$-limits) in 2-categories. They suggest that the theory will be relevant to the development of 2-topoi. As for as this author knows, their 2018 paper was the first appearance of partially lax limits in print.

We will begin with a discussion of marked $\infty$-categories in Section 2, then define lax limits in Section 3. We prove our main theorem in Section 4, and finally discuss the two main examples in Section 5.

\section{Marked $\infty$-categories}
\begin{definition}
A \emph{marked category} is a category $\mathcal{C}$ along with a specified collection of morphisms, such that all isomorphisms are marked, and any composition of marked morphisms is marked.

If $\mathcal{C},\mathcal{D}$ are marked categories, a functor $F:\mathcal{C}\rightarrow\mathcal{D}$ is \emph{marked} if it sends marked morphisms to marked morphisms.
\end{definition}

\noindent There is a 2-category $\text{Cat}^\dag$ of marked categories, marked functors, and natural isomorphisms.

\begin{definition}
A \emph{marked $\infty$-category} is an $\infty$-category $\mathcal{C}$ along with a marking on the homotopy category $h\mathcal{C}$. The $\infty$-category of marked $\infty$-categories is $\text{Cat}_\infty^\dag=\text{Cat}_\infty\times_\text{Cat}\text{Cat}^\dag$.
\end{definition}

\noindent That is, a marked $\infty$-category has some marked morphisms such that:
\begin{itemize}
\item all equivalences are marked;
\item given two equivalent morphisms $f\cong g$, $f$ is marked if and only if $g$ is;
\item marked morphisms are closed under composition;
\end{itemize}
\noindent and marked functors are functors sending marked morphisms to marked morphisms.

\begin{remark}
There are many ways to construct $\text{Cat}^\dag_\infty$. See \cite{BarwickK} 1.14 and \cite{HA} 4.1.7.1.
\end{remark}

\noindent We will be interested in multiple markings on the same $\infty$-category, so we use notation like $\mathcal{C}^\dag$ to denote a marking on $\mathcal{C}$. The basic examples are:
\begin{itemize}
\item For any $\infty$-category $\mathcal{C}$, there is a \emph{sharp} marking $\mathcal{C}^\sharp$, in which all morphisms are marked,
\item and a \emph{flat} marking $\mathcal{C}^\flat$, in which only equivalences are marked;
\item If $\mathcal{C}\to\mathcal{D}$ is a cocartesian (respectively cartesian) fibration, there is a \emph{natural} marking $\mathcal{C}^\natural$, in which the cocartesian morphisms (respectively cartesian morphisms) are marked;
\item If $\mathcal{O}$ is an $\infty$-operad (Lurie \cite{HA} writes $\mathcal{O}^\otimes$), there is an \emph{inert} marking $\mathcal{O}^\mathsection$, in which the inert morphisms are marked.
\end{itemize}

\noindent If $\mathcal{C}^\dagger,\mathcal{D}^\dagger\in\text{Cat}^\dagger_\infty$, we will write $\text{Fun}^\dagger(\mathcal{C}^\dagger,\mathcal{D}^\dagger)$ for the full subcategory of $\text{Fun}(\mathcal{C},\mathcal{D})$ spanned by marked functors.

The sharp and flat markings each promote to functors $$(-)^\sharp,(-)^\flat:\text{Cat}_\infty\to\text{Cat}^\dag_\infty.$$ There is also a forgetful functor $U:\text{Cat}_\infty^\dag\to\text{Cat}_\infty$ which forgets the marking, and a chain of adjunctions $(-)^\flat\vdash U\vdash (-)^\sharp$. Moreover, $(-)^\flat$ also has a left adjoint $|-|:\text{Cat}^\dag_\infty\to\text{Cat}_\infty$ which preserves finite products (\cite{HA} 4.1.7.2).

We regard $|\mathcal{C}^\dag|$ informally as the $\infty$-category obtained from $\mathcal{C}$ by adjoining formal inverses to all the marked morphisms.

\begin{example}
If $\mathcal{C}$ is any $\infty$-category, then $|\mathcal{C}^\flat|\cong\mathcal{C}$, and $|\mathcal{C}^\sharp|$ is the \emph{geometric realization}, or the $\infty$-groupoid built by adding inverses to all morphisms.
\end{example}

\noindent We can compute limits and colimits in $\text{Cat}^\dagger_\infty$ as so:

\begin{proposition}\label{PropMarkLim}
Let $I$ be a small $\infty$-category, $F^\dagger:I\to\text{Cat}^\dagger_\infty$ a functor, and $F:I\to\text{Cat}^\dagger_\infty\rightarrow\text{Cat}_\infty$ the composite with the forgetful functor $U$. Then $\text{colim}(F^\dagger)\cong\text{colim}(F)$ and $\text{lim}(F^\dagger)\cong\text{lim}(F)$ as underlying $\infty$-categories, and the markings may be recovered as follows:
\begin{enumerate}
\item A morphism $\phi$ of $\text{colim}(F)$ is marked if there is some $i\in I$ such that $F(i)\to\text{colim}(F)$ sends a marked morphism to one equivalent to $\phi$;
\item A morphism $\phi$ of $\text{lim}(F)$ is marked if for every $i\in I$, $\phi$ is sent by $\text{lim}(F)\to F(i)$ to a marked morphism.
\end{enumerate}
\end{proposition}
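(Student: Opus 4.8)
The plan is to pin down the underlying $\infty$-categories using the adjunctions, and then identify the markings from the universal properties. First I would observe that the forgetful functor $U\colon\text{Cat}^\dagger_\infty\to\text{Cat}_\infty$ sits in the chain $(-)^\flat\dashv U\dashv(-)^\sharp$, so it admits both a left and a right adjoint and therefore preserves all small limits and colimits. Applying this to $F^\dagger$ gives
$$U\,\text{colim}(F^\dagger)\cong\text{colim}(U\circ F^\dagger)=\text{colim}(F),\qquad U\,\text{lim}(F^\dagger)\cong\text{lim}(U\circ F^\dagger)=\text{lim}(F),$$
which is the first assertion. Writing $\text{colim}(F^\dagger)=(\text{colim}(F),M')$ and $\text{lim}(F^\dagger)=(\text{lim}(F),N')$ for the a priori unknown markings, it remains to identify $M'$ with the marking $M$ of (1) and $N'$ with the marking $N$ of (2).

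The preliminary point is that whether a functor is marked depends only on homotopy categories and is invariant under natural equivalence, so $\text{Map}^\dagger(\mathcal C^\dagger,\mathcal D^\dagger)$, the maximal subgroupoid of $\text{Fun}^\dagger(\mathcal C^\dagger,\mathcal D^\dagger)$, is a union of path components of $\text{Map}(\mathcal C,\mathcal D)$, namely those components consisting of marked functors. Comparing $(\Delta^1)^\sharp$ with $(\Delta^1)^\flat$ shows that a marking on an $\infty$-category is completely determined by which components of its space of morphisms it selects. Thus, since we already know the underlying $\infty$-categories, identifying $M'$ and $N'$ reduces to comparing the corepresented functors $\mathcal D^\dagger\mapsto\text{Map}^\dagger(-,\mathcal D^\dagger)$ as subfunctors of $\text{Map}(-,\mathcal D)$.

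For the colimit, let $\alpha_i\colon F(i)\to\text{colim}(F)$ be the cocone maps and let $M$ be the marking generated under composition by the images $\alpha_i(\text{marked morphisms of }F(i))$; this is the marking described in (1). Mapping out of the colimit and invoking the previous paragraph, $\text{Map}^\dagger(\text{colim}(F^\dagger),\mathcal D^\dagger)\cong\lim_i\text{Map}^\dagger(F^\dagger(i),\mathcal D^\dagger)$ is identified, inside $\text{Map}(\text{colim}(F),\mathcal D)$, with the subspace of those $g$ for which every composite $g\circ\alpha_i$ is marked. I would then check the elementary equivalence: a functor $g$ sends every morphism of $M$ to a marked morphism of $\mathcal D$ if and only if each $g\circ\alpha_i$ is marked. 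The forward direction is immediate since $\alpha_i(\text{marked})\subseteq M$; the reverse holds because $g$ preserves composition and equivalences, marked morphisms of $\mathcal D$ are closed under composition, and $M$ is generated by the $\alpha_i(\text{marked})$. Hence $(\text{colim}(F),M)$ corepresents the same functor as $\text{colim}(F^\dagger)$, so $M'=M$.

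The limit is dual and slightly cleaner. Let $\pi_i\colon\text{lim}(F)\to F(i)$ be the projections and let $N$ be the set of morphisms $\phi$ with $\pi_i(\phi)$ marked for every $i$, as in (2); because each $\pi_i$ preserves composition and equivalences and each target marking is closed under these, $N$ is automatically a valid marking. Mapping into the limit gives $\text{Map}^\dagger(\mathcal D^\dagger,\text{lim}(F^\dagger))\cong\lim_i\text{Map}^\dagger(\mathcal D^\dagger,F^\dagger(i))$, identified with those $h$ for which each $\pi_i\circ h$ is marked, and directly from the definition of $N$ this holds if and only if $h$ sends marked morphisms into $N$, so $N'=N$. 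The main obstacle here is not any single computation but the bookkeeping that converts the universal property into a statement about markings: one must verify carefully that $\text{Map}^\dagger$ is a union of components of $\text{Map}$, so that passing from $\text{Cat}_\infty$ to $\text{Cat}^\dagger_\infty$ merely intersects with a full subspace and the underlying colimit and limit are genuinely reused; and, in the colimit case, one must note that the images $\alpha_i(\text{marked})$ need not already be closed under composition, so that (1) is correctly read as \emph{generating} a marking — a reading the universal property confirms.
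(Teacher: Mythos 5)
Your proposal is correct, and for the underlying $\infty$-category it coincides with the paper's argument: both of you use the adjunction chain $(-)^\flat\vdash U\vdash(-)^\sharp$ to conclude that $U$ preserves limits and colimits, so $\text{colim}(F^\dagger)$ and $\text{lim}(F^\dagger)$ have the expected underlying $\infty$-categories. Where you diverge is in identifying the marking. The paper argues directly: it equips $\text{colim}(F)$ with the marking of (1), observes that the cocone maps are then marked and so induce a marked comparison functor $e:\text{colim}(F^\dagger)\to\text{colim}(F)^\dagger$ which is an equivalence of underlying $\infty$-categories, and checks the reverse containment of markings by hand (dually for limits). You instead run a Yoneda-style argument: since marked functors form a union of path components of the mapping space, the marking is determined by the subfunctor $\text{Map}^\dagger(\text{colim}(F^\dagger),-)\subseteq\text{Map}(\text{colim}(F),U(-))$, which you compute as the space of functors $g$ with each $g\circ\alpha_i$ marked, and then match against the marking generated by the images. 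The two mechanisms cost about the same, but your extra care buys something real: the class of morphisms described in (1) need \emph{not} be closed under composition --- for the pushout $\Delta^1\amalg_{\Delta^0}\Delta^1\simeq\Delta^2$ with sharp markings on both legs, the composite edge of $\Delta^2$ must be marked but is not equivalent to the image of a marked morphism under either leg --- so (1) has to be read as \emph{generating} the marking. The paper's proof takes (1) literally (``if $\phi$ is marked in $\text{colim}(F)$, then by definition it arises from a marked morphism of $F^\dagger(i)$''), which elides exactly this point, whereas your corepresentability argument detects and repairs it: closure of the target marking under composition is precisely what makes ``each $g\circ\alpha_i$ marked'' equivalent to ``$g$ marked for the generated marking.'' On the limit side the two proofs agree in substance, and there the paper's condition (2) is literally a marking, as you note, since ``all projections marked'' is automatically closed under composition.
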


\begin{proof}
Let $\text{colim}(F)^\dagger$ be marked as in (1). By construction, the functors $F^\dagger(i)\to\text{colim}(F)^\dagger$ are marked, so they induce a marked functor $e:\text{colim}(F^\dagger)\to\text{colim}(F)^\dagger$. Since the forgetful functor $U:\text{Cat}^\dagger_\infty\to\text{Cat}_\infty$ has a right adjoint, it preserves colimits, so $e$ is an equivalence of $\infty$-categories. We need only show the following: If a morphism $\phi$ of $\text{colim}(F)$ is marked, then $\phi$ is marked in $\text{colim}(F^\dagger)$. However, if $\phi$ is marked in $\text{colim}(F)$, then by definition it arises from a marked morphism of $F^\dagger(i)$ for some $i\in I$, and since $F^\dagger(i)\to\text{colim}(F^\dagger)$ is a marked functor, therefore $\phi$ is marked in $\text{colim}(F^\dagger)$.

The proof for limits is exactly the same.
\end{proof}

\begin{corollary}
The functor $(-)^\sharp:\text{Cat}_\infty\to\text{Cat}_\infty^\dagger$ preserves small colimits. Therefore it has a right adjoint which we denote $U^\dagger:\text{Cat}_\infty^\dagger\to\text{Cat}_\infty$.
\end{corollary}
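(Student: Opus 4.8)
The plan is to deduce colimit-preservation directly from Proposition \ref{PropMarkLim} and then invoke the adjoint functor theorem. Fix a small $\infty$-category $I$ and a diagram $G\colon I\to\text{Cat}_\infty$; I want to show that the canonical comparison $\text{colim}(G^\sharp)\to(\text{colim}\,G)^\sharp$ in $\text{Cat}^\dagger_\infty$ is an equivalence. Applying Proposition \ref{PropMarkLim} to $F^\dagger=G^\sharp$---whose composite with $U$ is $G$, since $U(\mathcal{C}^\sharp)\simeq\mathcal{C}$---shows that the underlying $\infty$-category of $\text{colim}(G^\sharp)$ is $\text{colim}\,G$, which is also the underlying $\infty$-category of $(\text{colim}\,G)^\sharp$. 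As equivalences in $\text{Cat}^\dagger_\infty=\text{Cat}_\infty\times_{\text{Cat}}\text{Cat}^\dagger$ are detected on underlying $\infty$-categories together with markings, it remains only to check that the two markings on $\text{colim}\,G$ agree.

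By part (1) of Proposition \ref{PropMarkLim}, a morphism of $\text{colim}\,G$ is marked in $\text{colim}(G^\sharp)$ exactly when it lies in the marking generated (under composition and equivalence) by the images of morphisms of the various $G(i)$, since every morphism of $G(i)^\sharp$ is marked. The target $(\text{colim}\,G)^\sharp$ carries the sharp marking, so the comparison is an equivalence if and only if every morphism of $\text{colim}\,G$ is a composite of images of morphisms from the pieces. This generation statement is the crux of the argument, and the step I expect to require the most care.

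To establish it, I would use the standard description of the colimit as a localization of the associated cocartesian fibration: $\text{colim}_I G$ is the localization of $\smallint G$ at its cocartesian edges (compatibly with the universal property recalled in the introduction, where forcing all the $\alpha_\phi$ to be equivalences recovers the ordinary colimit). In $\smallint G$ every morphism factors as a cocartesian edge followed by a morphism lying in a single fibre $G(i)$; under the localization the cocartesian edges become equivalences, while the fibrewise morphisms are precisely the images of morphisms of the $G(i)$. Since every morphism of a localization is a composite of images of morphisms of the source and inverses of the inverted edges, it follows that every morphism of $\text{colim}_I G$ is a composite of images of morphisms from the pieces, which is what we needed. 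Hence the marking is sharp and $(-)^\sharp$ preserves this colimit; as $I$ and $G$ were arbitrary, $(-)^\sharp$ preserves all small colimits.

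Finally, both $\text{Cat}_\infty$ and $\text{Cat}^\dagger_\infty$ are presentable, so a functor between them preserving small colimits is a left adjoint by the adjoint functor theorem; this produces the desired right adjoint $U^\dagger$.
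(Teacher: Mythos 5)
Your proposal is correct, and it is worth noting that the paper gives no written proof at all: the corollary is presented as an immediate consequence of Proposition \ref{PropMarkLim}. Your argument supplies exactly the content that deduction leaves implicit, and you correctly locate the one nontrivial step. Read literally, Proposition \ref{PropMarkLim}(1) marks only those morphisms equivalent to the image of a \emph{single} marked morphism from one piece $G(i)$; that class need not be closed under composition (e.g.\ in a pushout, where morphisms are zigzag composites across the pieces), so your reading of it as the \emph{generated} marking is the right repair, and the corollary then hinges on showing that in the sharp case this generated marking exhausts all of $\text{colim}(G)$. Your proof of this via $\text{colim}(G)\cong|\smallint G^\natural|$ (HTT 3.3.4.3, quoted in Remark \ref{RmkFully}), factoring each morphism of $\smallint G$ as a cocartesian edge followed by a fibrewise morphism, is sound; note there is no circularity in appealing to HTT 3.3.4.3 here, since it is prior literature rather than a consequence of this corollary, even though the paper later recovers it as the sharp-marking case of Theorem \ref{ThmLaxCat}. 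One small point you could make explicit: the claim that every morphism of a localization is a composite of images and formal inverses holds at the level of homotopy categories ($h$ of an $\infty$-categorical localization is the $1$-categorical localization of $h\mathcal{C}$), and that suffices because a marking is by definition a structure on the homotopy category. Your final step---presentability of $\text{Cat}_\infty$ and $\text{Cat}^\dagger_\infty$ (the latter via marked simplicial sets) plus the adjoint functor theorem---is evidently also the paper's intended route from colimit-preservation to the existence of $U^\dagger$.
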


\noindent Explicitly, $U^\dagger(\mathcal{C}^\dagger)$ is the subcategory of $\mathcal{C}^\dagger$ spanned by the marked morphisms (and all objects). In conclusion, we have a chain of adjunctions $$|-|\vdash(-)^\flat\vdash U\vdash(-)^\sharp\vdash U^\dagger.$$

\section{Lax limits}
\noindent Lax limits are limits indexed by a twisted arrow category, which we will review first. Twisted arrow categories are classical, but the analogue for $\infty$-categories is due to Barwick \cite{TwAr}.

\begin{definition}
If $I$ is an $\infty$-category, let $\text{Tw}(I)\to I\times I^\text{op}$ be the right fibration associated to the functor $\text{Map}(-,-):I^\text{op}\times I\rightarrow\text{Top}$. We call $\text{Tw}(I)$ the \emph{twisted arrow $\infty$-category} of $I$.
\end{definition}

\noindent We may regard $\text{Tw}(I)$ as follows: objects are morphisms $i\xrightarrow{f} j$ in $I$, and morphisms $f\to f^\prime$ are \emph{twisted} commutative squares $$\xymatrix{
i\ar[r]^f\ar[d] &j \\
i^\prime\ar[r]_{f^\prime} &j^\prime.\ar[u]
}$$

\begin{definition}
If $I^\dag$ is marked and $i\in I$, there is an induced marking on the undercategory $I_{i/}^\dag$: a morphism is marked if the forgetful functor $I_{i/}\to I$ sends it to a marked morphism of $I$. In the same way, $I_{/i}^\dag$ is also marked.
\end{definition}

\noindent Notice that precomposition with any morphism $X\rightarrow Y$ induces a marked functor $I_{Y/}^\dag\rightarrow I_{X/}^\dag$, so that the undercategory construction (and similarly the overcategory construction) is functorial $$I_{-/}^\dag:I^\text{op}\rightarrow\text{Cat}_\infty^\dag,$$ $$I_{/-}^\dag:I\rightarrow\text{Cat}_\infty^\dag.$$ We say an $\infty$-category $\mathcal{C}$ is \emph{tensored} (respectively \emph{cotensored}) over $\text{Cat}_\infty$ if there are functors $-\otimes-:\text{Cat}_\infty\times\mathcal{C}\to\mathcal{C}$, respectively $[-,-]:\text{Cat}_\infty^\text{op}\times\mathcal{C}\to\mathcal{C}$.

\begin{definition}\label{DefLax}
Suppose $I^\dag$ is a marked small $\infty$-category, $\mathcal{C}$ is an $\infty$-category, and $F:I\rightarrow\mathcal{C}$ is a functor (of $\infty$-categories). If $\mathcal{C}$ is tensored over $\text{Cat}_\infty$, we define $$\text{colim}^\text{lax}(F)=\text{colim}\left(\text{Tw}(I)\rightarrow I^\text{op}\times I\xrightarrow{|I_{-/}^\dag|\times F}\text{Cat}_\infty\times\mathcal{C}\xrightarrow{-\otimes -}\mathcal{C}\right),$$ $$\text{colim}^\text{oplax}(F)=\text{colim}\left(\text{Tw}(I)\to I^\text{op}\times I\xrightarrow{|I_{-/}^{\text{op}\dag}|\times F}\text{Cat}_\infty\times\mathcal{C}\xrightarrow{-\otimes -}\mathcal{C}\right).$$ If $\mathcal{C}$ is cotensored over $\text{Cat}_\infty$, we define $$\text{lim}^\text{lax}(F)=\text{lim}\left(\text{Tw}(I)\rightarrow I^\text{op}\times I\xrightarrow{|I_{/-}^\dag|\times F}\text{Cat}_\infty^\text{op}\times\mathcal{C}\xrightarrow{[-,-]}\mathcal{C}\right),$$ $$\text{lim}^\text{oplax}(F)=\text{lim}\left(\text{Tw}(I)\rightarrow I^\text{op}\times I\xrightarrow{|I_{/-}^{\text{op}\dag}|\times F}\text{Cat}^\text{op}_\infty\times\mathcal{C}\xrightarrow{[-,-]}\mathcal{C}\right).$$
\end{definition}

\begin{remark}
Such a colimit (respectively limit) over the twisted arrow $\infty$-category is a \emph{coend} (respectively \emph{end}), so we may write for example: $$\text{colim}^\text{lax}(F)=\text{coend}(|I_{-/}^\dag|\otimes F(-));$$ $$\text{lim}^\text{lax}(F)=\text{end}([|I_{/-}^\dag|,F(-)]).$$ However, we won't say anything about ends and coends; see \cite{GHN} for more.
\end{remark}

\begin{example}
If $I^\flat$ has the flat marking, then $|I_{/-}^\flat|=I_{/-}$ and $|I_{-/}^\flat|=I_{-/}$, so the formulas reduce to the formulas for `fully' lax limits in \cite{GHN}.
\end{example}

\begin{proposition}\label{PropLim}
If $I^\sharp$ has the sharp marking, then $$\text{colim}^\text{lax}(F)\cong\text{colim}^\text{oplax}(F)\cong\text{colim}(F),$$ $$\text{lim}^\text{lax}(F)\cong\text{lim}^\text{oplax}(F)\cong\text{lim}(F).$$
\end{proposition}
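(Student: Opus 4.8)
The plan is to show that the sharp marking forces the weight in Definition \ref{DefLax} to be constant at the point, so that the defining coend (respectively end) collapses to the ordinary colimit (respectively limit). I will treat the lax colimit; the remaining three cases are formally dual.

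First I would compute the weight $|I_{-/}^\sharp|$. When every morphism of $I$ is marked, the induced marking on each undercategory $I_{i/}$ is again sharp, so $|I_{i/}^\sharp|$ is the geometric realization of $I_{i/}$, as recorded in Section 2. Since $I_{i/}$ has the initial object $\mathrm{id}_i$, this realization is weakly contractible, hence $|I_{i/}^\sharp|\simeq\ast$; and this equivalence is natural in $i$, so $|I_{-/}^\sharp|$ is the constant functor at the terminal $\infty$-category $\ast$. The identical argument handles $|I_{/-}^\sharp|$ (using the terminal object $\mathrm{id}_i$ of $I_{/i}$) and the two opposite-category variants, since the opposite of an $\infty$-category with an initial or terminal object again has one.

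Next I would substitute. A tensoring over $(\text{Cat}_\infty,\times,\ast)$ is a module structure, so its unit acts trivially: $\ast\otimes X\simeq X$. Hence the composite $\text{Tw}(I)\to\text{Cat}_\infty\times\mathcal{C}\xrightarrow{-\otimes-}\mathcal{C}$ becomes equivalent to $F$ precomposed with the projection $\pi\colon\text{Tw}(I)\to I$ onto the factor carrying $F$, giving
$$\text{colim}^\text{lax}(F)\simeq\text{colim}_{\text{Tw}(I)}(F\circ\pi).$$
Dually, cotensoring satisfies $[\ast,X]\simeq X$, which reduces $\text{lim}^\text{lax}$ and $\text{lim}^\text{oplax}$ to limits over $\text{Tw}(I)$ of $F$ pulled back along the appropriate projection.

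Finally I would identify $\text{colim}_{\text{Tw}(I)}(F\circ\pi)$ with $\text{colim}_I F$ by showing $\pi$ is cofinal. The fiber of $\pi$ over an object of $I$ is a slice $\infty$-category of $I$ (or its opposite), which has an initial or terminal object and is therefore weakly contractible; together with the fibration structure of $\pi$ this yields cofinality by Quillen's Theorem A, and dually the complementary projection is coinitial, settling the limit cases. This last step is the main obstacle: the fiberwise contractibility is immediate, but upgrading it to genuine cofinality requires either exhibiting $\pi$ as a suitable (co)cartesian fibration, so that its fibers detect cofinality, or computing the relevant comma $\infty$-categories directly; alternatively one may invoke the coend calculus of \cite{GHN}, under which the whole statement is the familiar fact that a weighted colimit with weight constant at $\ast$ is the ordinary colimit.
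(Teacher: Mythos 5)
Your first two steps coincide with the paper's proof: under the sharp marking each $|I_{i/}^\sharp|$ (resp.\ $|I_{/i}^\sharp|$) is the geometric realization of an $\infty$-category with an initial (resp.\ terminal) object, hence contractible, and the four formulas of Definition \ref{DefLax} collapse to (co)limits of $F$ pulled back to $\text{Tw}(I)$. The genuine gap is the step you yourself flag as the main obstacle, and your proposed resolutions do not close it. Note first that in all four formulas $F$ is evaluated on the covariant $I$-coordinate of $I^\text{op}\times I$, so every case reduces to a (co)limit of $F\circ\pi$ along the \emph{single} projection $\pi:\text{Tw}(I)\to I$: the colimit cases need $\pi$ left cofinal, and the limit cases need the same $\pi$ right cofinal. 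Your claim that ``dually the complementary projection is coinitial, settling the limit cases'' is therefore both irrelevant (coinitiality of $\text{Tw}(I)\to I^\text{op}$ would compute limits of functors of the opposite variance, not of $F\circ\pi$) and based on a false symmetry: one has $\text{Tw}(I^\text{op})\simeq\text{Tw}(I)$, while $\text{Tw}(I)^\text{op}$ is the left-fibration variant classified by the mapping space functor, so right cofinality of $\pi$ is \emph{not} formally dual to left cofinality. The paper's lemma makes this point explicitly: $\text{Tw}(I)$ often has an initial object but almost never a terminal one, and the two cofinality proofs are genuinely different.

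Your fiberwise Theorem A fallback can be made rigorous, but only on the colimit side: $\pi$ is a cartesian fibration (the composite of the right fibration $\text{Tw}(I)\to I\times I^\text{op}$ with the projection), its fiber over $i$ is $(I_{i/})^\text{op}$, and for a cartesian fibration the inclusion of the fiber into $\text{Tw}(I)\times_I I_{i/}$ admits an adjoint, so contractibility of fibers does give left cofinality --- this is essentially Glasman's argument \cite{Glasman}, which the paper cites. The dual criterion, however, would require $\pi$ to be a \emph{cocartesian} fibration, which it is not, so the limit cases remain unproved by this route. The paper fills exactly this hole by hand: it shows $\pi_{/i}=\text{Tw}(I)\times_I I_{/i}$, whose objects are spans $i\leftarrow j\to k$, is weakly contractible by exhibiting endofunctors $F_1,F_2,F_3$ and a zig-zag of natural transformations $\text{id}\Rightarrow F_1\Leftarrow F_2\Rightarrow F_3$ contracting the identity to the constant functor at $i\leftarrow i\to i$. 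Some such direct argument is indispensable; your final appeal to the coend calculus of \cite{GHN} merely renames the missing statement, since in that formalism ``a weight constant at $\ast$ gives the conical (co)limit'' \emph{is} the assertion that $\pi$ is cofinal in both directions, and the right-cofinal half is precisely what must be proved.
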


\noindent The proposition reduces to a lemma about the twisted arrow category. We say that a functor $i:\mathcal{A}\to\mathcal{B}$ is \emph{left cofinal} if for any $F:\mathcal{B}\to\mathcal{C}$, $\text{colim}(F)\cong\text{colim}(Fi)$, and \emph{right cofinal} if the same is true for limits. See \cite{HTT} 4.1, but note that Lurie uses `cofinal' where we use `left cofinal'.

A functor $\mathcal{A}\to\mathcal{B}$ is right cofinal if and only if $\mathcal{A}^\text{op}\to\mathcal{B}^\text{op}$ is left cofinal, so that the two notions are dual.

\begin{lemma}
For any $\infty$-category $I$, projection onto the first coordinate $\pi:\text{Tw}(I)\to I$ is both left and right cofinal.
\end{lemma}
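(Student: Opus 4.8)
The plan is to verify the hypotheses of Quillen's Theorem A (\cite{HTT} 4.1.3.1). In that language, $\pi$ is left cofinal if and only if the fiber product $\text{Tw}(I)\times_I I_{i/}$ is weakly contractible for every $i\in I$, and (dually) right cofinal if and only if $\text{Tw}(I)\times_I I_{/i}$ is weakly contractible for every $i$; here $\pi$ is the source projection, sending an object $i\xrightarrow{f}j$ to $i$, and $I_{i/}\to I$, $I_{/i}\to I$ are the usual projections. I would deduce both contractibility statements simultaneously from a single structural fact: that $\pi$ is a cartesian fibration with weakly contractible fibers.

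First I would check that $\pi$ is a cartesian fibration. Given an object $f':i'\to j'$ of $\text{Tw}(I)$ and a morphism $u:i\to i'$ in $I$ ending at $\pi(f')=i'$, the twisted square whose source component is $u$ and whose target component is $\text{id}_{j'}$ defines a morphism $(f'\circ u)\to f'$ lying over $u$. Unwinding the description of morphisms in $\text{Tw}(I)$ as twisted squares, I would show this morphism is $\pi$-cartesian: a morphism $g\to f'$ covering some $w:a\to i'$ together with a factorization $w=u\circ v$ extends uniquely to a morphism $g\to f'\circ u$ over $v$, precisely because the target component of our lift is already an equivalence. This identifies the $\pi$-cartesian edges as exactly those whose target component is invertible.

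Next I would identify the fibers. Since a fiberwise morphism is forced to have identity source component, the fiber $\pi^{-1}(i)$ consists of the morphisms out of $i$ with the target varying contravariantly, so $\pi^{-1}(i)\simeq (I_{i/})^\text{op}$; as $I_{i/}$ has the initial object $\text{id}_i$, this fiber has a terminal object and is weakly contractible. Now base change along $I_{i/}\to I$ (respectively $I_{/i}\to I$) preserves cartesian fibrations, so $\text{Tw}(I)\times_I I_{i/}\to I_{i/}$ is again a cartesian fibration whose fiber over an object lying over $a\in I$ is $\pi^{-1}(a)\simeq(I_{a/})^\text{op}$, again weakly contractible. A cartesian fibration with weakly contractible fibers induces an equivalence on realizations---applying $|-|$ turns it into the colimit of its fiberwise realizations, each of which is a point---so its total space is weakly equivalent to its base. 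Since $I_{i/}$ and $I_{/i}$ are themselves weakly contractible (having an initial, respectively terminal, object), both fiber products are weakly contractible, and Theorem A gives that $\pi$ is both left and right cofinal.

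The step I expect to be the main obstacle is the cartesian-fibration verification, where the care lies entirely in the variance bookkeeping of twisted squares---the source component composing covariantly while the target component composes contravariantly---and in pinning down that the cartesian edges are precisely those with invertible target component. Once that is in hand, the homotopical conclusion (contractible fibers over the weakly contractible slices $I_{i/}$ and $I_{/i}$) is a formal consequence of standard properties of cartesian fibrations.
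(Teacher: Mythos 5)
Your proof is correct, but it takes a genuinely different and more uniform route than the paper. The paper verifies the two Theorem A conditions by separate, asymmetric arguments: for left cofinality it follows Glasman, exhibiting an adjunction between $\pi_{i/}$ and $(I_{i/})^\text{op}$; for right cofinality it writes down an explicit zigzag of natural transformations $\text{id}\Rightarrow F_1\Leftarrow F_2\Rightarrow F_3$ contracting $\pi_{/i}$ onto a point --- indeed the paper remarks that the two halves ``will not be similar.'' You instead isolate a single structural fact, that the source projection $\pi$ is a cartesian fibration with fiber $(I_{a/})^\text{op}$ over $a$, and deduce both contractibility statements at once by base change together with the standard fact that a cartesian fibration with weakly contractible fibers is a weak homotopy equivalence (provable exactly as you sketch, since the realization of the total space is the colimit of the realizations of the fibers). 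Your computations check out: the lift $(u,\text{id}_{j'})$ of $u$ is cartesian, the $\pi$-cartesian edges are exactly the twisted squares with invertible target component, and the fiber identification is right. One refinement: the uniqueness you verify for the lifting property is uniqueness at the level of the homotopy category, whereas cartesianness in the $\infty$-categorical sense requires contractibility of the relevant lifting spaces; the cleanest way to get this (and the step you flag as the main obstacle) is to factor $\pi$ as the right fibration $\text{Tw}(I)\to I\times I^\text{op}$ of the paper's definition followed by the projection $I\times I^\text{op}\to I$, and invoke closure of cartesian fibrations under composition in \cite{HTT}, which simultaneously confirms your description of the cartesian edges. In exchange for this extra machinery, your argument buys symmetry and a reusable lemma applicable to other fibrations, whereas the paper's two ad hoc homotopies stay entirely elementary.
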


\begin{proof}
Note that $\text{Tw}(I)$ is highly asymmetric; for example, it often has an initial object, but almost never has a terminal object. Therefore, the proofs of left and right cofinality will not be similar.

Left cofinality was proven by Glasman in \cite{Glasman} 2.5; we will sketch the proof. By Quillen's Theorem A (\cite{HTT} 4.1.3.1), it suffices to prove that $\pi_{i/}=\text{Tw}(I)\times_I I_{i/}$ is weakly contractible for each $i\in I$. An object of $\pi_{i/}$ is a diagram $i\to j\to k$. Projection onto $k$ describes a functor $\pi_{i/}\to(I_{i/})^\text{op}$, and this functor has a right adjoint which sends $i\to k$ to $i\to i\to k$ in $\pi_{i/}$. Since $(I_{i/})^\text{op}$ is weakly contractible (as it has a terminal object), so is $\pi_{i/}$.

For right cofinality, it suffices to prove that $\pi_{/i}=\text{Tw}(I)\times_I I_{/i}$ is weakly contractible for each $i$. An object of $\pi_{/i}$ is a diagram $i\leftarrow j\to k$. We define functors $F_1,F_2,F_3:\pi_{/i}\to\pi_{/i}$ as follows: $$F_1(i\leftarrow j\to k)=(i\leftarrow j\to j)$$ $$F_2(i\leftarrow j\to k)=(i\leftarrow j\to i)$$ $$F_3(i\leftarrow j\to k)=(i\leftarrow i\to i),$$ where all maps $i\to i$ or $j\to j$ are the identity. There are natural transformations $\text{id}\Rightarrow F_1\Leftarrow F_2\Rightarrow F_3$ given by the vertical maps in the diagram $$\xymatrix{
&j\ar[ld]\ar[r]\ar[d] &k \\
i &j\ar[l]\ar[r] &j\ar[u]\ar[d] \\
&j\ar[lu]\ar[r]\ar[u]\ar[d] &i \\
&i\ar[luu]\ar[r] &i.\ar[u]
}$$ This exhibits a homotopy between the identity functor and a constant functor, so $\pi_{/i}$ is weakly contractible and $\pi$ is right cofinal.
\end{proof}

\begin{proof}[Proof of Proposition \ref{PropLim}]
By definition, $|I_{i/}^\sharp|$ is the geometric realization of $I_{i/}$, which is contractible since $I_{i/}$ has an initial object. Therefore, $\text{colim}^\text{lax}(F)$ is the colimit of $\text{Tw}(I)\rightarrow I\xrightarrow{F}\mathcal{C}$, which is just the colimit of $F$ because the projection $\text{Tw}(I)\to I$ is left cofinal. The proof for oplax colimits is identical. Since $\text{Tw}(I)\to I$ is also right cofinal, the proof for lax and oplax limits is also identical.
\end{proof}

\section{Lax limits of $\infty$-categories}
\noindent The $\infty$-category $\text{Cat}_\infty$ is tensored and cotensored over itself via the functors $-\times -:\text{Cat}_\infty\times\text{Cat}_\infty\to\text{Cat}_\infty$ and $\text{Fun}(-,-):\text{Cat}_\infty^\text{op}\times\text{Cat}_\infty\to\text{Cat}_\infty$. We are nearly ready to prove our main result, that lax limits and colimits in $\text{Cat}_\infty$ can be computed explicitly via Grothendieck constructions.

\begin{definition}\label{DefIM}
Suppose that $I^\dag$ is a marked $\infty$-category and $F:I\to\text{Cat}_\infty$ is a functor. We denote by $p:\smallint F\to I$ the associated cocartesian fibration (given by the Grothendieck construction). Then the \emph{induced marking} $\smallint F^\dag$ is given as follows: A morphism $\phi$ of $\smallint F$ is marked if and only if $p(\phi)$ is marked and $\phi$ is $p$-cocartesian.

In exactly the same way there is an induced marking on the associated cartesian fibration $\intsmall F^\dag\to I^{\text{op}\dag}$.
\end{definition}

\begin{remark}
Given two functors $t:I^\dagger\to J^\dagger$ and $F:J\to\text{Cat}_\infty$, Definition \ref{DefIM} is chosen so that we have a pullback square of marked $\infty$-categories $$\xymatrix{
\smallint Ft^\dagger\ar[r]\ar[d] &\smallint F^\dagger\ar[d] \\
I^\dagger\ar[r] &J^\dagger.
}$$
\end{remark}

\begin{example}
If $I^\flat$ has the flat marking, $\smallint F^\flat$ also has the flat marking. If $I^\sharp$ has the sharp marking, $\smallint F^\natural$ has the natural marking for a cocartesian fibration (the marking by cocartesian edges). Thus, the marking $\smallint F^\dag$ interpolates between the flat marking and the natural marking.
\end{example}

\begin{theorem}\label{ThmLaxCat}
Suppose $I^\dag$ is a small marked $\infty$-category and $F:I\to\text{Cat}$ is a functor. Then $$\text{colim}^\text{lax}(F)\cong|\smallint F^\dag|,$$ $$\text{colim}^\text{oplax}(F)\cong|\intsmall F^\dag|,$$ $$\text{lim}^\text{lax}(F)\cong\text{Fun}_{/I}^\dag(I^\dag,\smallint F^\dag),$$ $$\text{lim}^\text{oplax}(F)=\text{Fun}^\dag_{/I^\text{op}}(I^{\text{op}\dag},\intsmall F^\dag).$$
\end{theorem}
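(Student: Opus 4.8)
The plan is to take the flat-marked case as input --- it is exactly GHN's Theorem~\ref{ThmGHN} together with its limit counterpart --- and then to isolate the effect of enlarging the marking on $I$ from flat to $\dag$. I would treat the two colimit formulas together and the two limit formulas together, in each case reducing to the flat statement by a different formal device: the left adjoint $|-|$ for colimits, and the universal property of localization inside a functor category for limits.

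For the colimits, the idea is that $|-|:\text{Cat}_\infty^\dag\to\text{Cat}_\infty$ is left adjoint to $(-)^\flat$ and preserves finite products, so it commutes with the entire formula of Definition~\ref{DefLax}. Using $F(-)\cong|F(-)^\flat|$ and product-preservation, the integrand rewrites naturally over $\text{Tw}(I)$ as $|I_{-/}^\dag|\times F(-)\cong|I_{-/}^\dag\times F(-)^\flat|$, and pulling $|-|$ out of the colimit gives $\text{colim}^\text{lax}(F)\cong\big|\,\text{colim}_{\text{Tw}(I)}(I_{-/}^\dag\times F(-)^\flat)\,\big|$ with the inner colimit now computed in $\text{Cat}_\infty^\dag$. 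By Proposition~\ref{PropMarkLim} this inner colimit has underlying $\infty$-category $\text{colim}_{\text{Tw}(I)}(I_{-/}\times F(-))\cong\smallint F$ (the flat case of Theorem~\ref{ThmGHN}), carrying the marking generated by the images of the marked morphisms of the terms $I_{i/}^\dag\times F(j)^\flat$. It then remains only to match this generated marking with the induced marking of Definition~\ref{DefIM}, after which applying $|-|$ yields $|\smallint F^\dag|$. The oplax colimit is identical, using $I_{-/}^{\text{op}\dag}$ and the cartesian fibration $\intsmall F$.

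For the limits, the relevant fact is that $\text{Fun}(|I_{/i}^\dag|,\mathcal{D})$ is the full subcategory of $\text{Fun}(I_{/i},\mathcal{D})$ on those functors inverting the marked morphisms of $I_{/i}^\dag$. Taking the limit over $\text{Tw}(I)$ thus exhibits $\text{lim}^\text{lax}(F)$ as a full subcategory of the fully lax limit, which by the flat case is the $\infty$-category $\text{Fun}_{/I}(I,\smallint F)$ of \emph{all} sections of $p$. The clean point is to evaluate the inverting condition at the target of a marked edge. If $\phi:i\to j$ is marked and $s$ is a section, then under the GHN identification the functor $I_{/j}\to F(j)$ attached to $s$ sends the marked morphism $\phi$ of $I_{/j}^\dag$ --- running from the object $\phi:i\to j$ to the terminal object $\text{id}_j$ --- precisely to the comparison map $\phi_!s(i)\to s(j)$ in the fiber $F(j)$. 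Hence ``the local functor inverts the marked morphisms of $I_{/j}^\dag$'' is equivalent to ``$\phi_!s(i)\to s(j)$ is an equivalence,'' i.e.\ to ``$s(\phi)$ is $p$-cocartesian'' (the reverse implication following from functoriality of pushforward). This identifies the cut-out subcategory with the marked sections $\text{Fun}_{/I}^\dag(I^\dag,\smallint F^\dag)$, and the oplax limit follows dually over $I^\text{op}$ and $\intsmall F$.

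The main obstacle in both halves is the same: matching a marking (resp.\ a fullness condition) produced by the (co)end against the cocartesian marking of the Grothendieck construction. For the limits the evaluation-at-the-target trick above settles it, since the slice $I_{/j}$ has a terminal object. For the colimits the comparison is less immediate, because one is identifying the generators of a localization rather than the objects of a full subcategory; here the hard part will be to verify the matching edge-by-edge. Being $p$-cocartesian and lying over a marked edge are both stable under the pullback squares of the Remark following Definition~\ref{DefIM}, so it suffices to pull back along each $t:(\Delta^1)^\dag\to I^\dag$ (with the nonidentity edge marked) and check directly that, for $\smallint(Ft)$ --- an explicit correspondence on the single functor $F(\phi)$ --- the marking generated by the coend agrees with the cocartesian edges over $\phi$. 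I expect this reduction, and the bookkeeping of the GHN equivalence that underlies it, to be the most delicate step of the argument.
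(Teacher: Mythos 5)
Your proposal takes essentially the same route as the paper: the colimit half via product- and colimit-preservation of $|-|$ combined with Proposition \ref{PropMarkLim} and the flat case of Remark \ref{RmkFully}, and the limit half via the pointwise full-subcategory inclusion $\text{Fun}(|I_{/i}^\dag|,F(j))\subseteq\text{Fun}(I_{/i},F(j))$ passed through the limit and identified with the marked sections. Your extra verifications (the pullback to $(\Delta^1)^\dag$ for matching the generated marking with the cocartesian one, and the evaluation-at-the-target argument for the fullness condition) correctly fill in steps the paper asserts without detail, so the proof is sound.
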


\begin{remark}\label{RmkFully}
If $I^\sharp$ has the sharp marking, then the theorem describes how to compute ordinary limits and colimits in $\text{Cat}_\infty$. This is \cite{HTT} 3.3.3.2 (for limits) and \cite{HTT} 3.3.4.3 (for colimits).

If $I^\flat$ has the sharp marking, then the theorem simplifies considerably and describes how to compute fully lax limits and colimits: $\text{colim}^\text{lax}(F)\cong\smallint F$ and $\text{lim}^\text{lax}(F)\cong\text{Fun}_{/I}(I,\smallint F)$. These are the main results of \cite{GHN}.
\end{remark}

\begin{lemma}
Suppose $I$ is a small $\infty$-category and $\eta:F_0\to F_1$ is a natural transformation of functors $I\to\text{Cat}$. If each functor $\eta_i:F_0(i)\to F_1(i)$ is fully faithful, then so is $\text{lim}(\eta):\text{lim}(F_0)\to\text{lim}(F_1)$. Moreover, an object $X\in\text{lim}(F_1)$ is in the essential image of $\text{lim}(\eta)$ if and only if $\text{lim}(F_1)\to F_1(i)$ sends $X$ to an object in the essential image of $\eta_i$ for each $i\in I$.
\end{lemma}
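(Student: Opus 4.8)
The plan is to reduce everything to the computation of mapping spaces in a limit. The key tool is the formula that, for any $G:I\to\text{Cat}_\infty$ and objects $X,Y\in\text{lim}(G)$ with components $X_i,Y_i\in G(i)$, there is a natural equivalence $\text{Map}_{\text{lim}(G)}(X,Y)\cong\lim_{i\in I}\text{Map}_{G(i)}(X_i,Y_i)$. I would establish this by writing $\text{Map}_{\mathcal{C}}(c,c')=\{c\}\times_{\mathcal{C}}\text{Fun}(\Delta^1,\mathcal{C})\times_{\mathcal{C}}\{c'\}$ for any $\infty$-category $\mathcal{C}$, and observing that each ingredient commutes with limits: $\text{Fun}(\Delta^1,-)$ is a cotensor and so preserves limits, the terminal object is a limit of terminal objects, and finite fiber products commute with arbitrary limits. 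Thus the whole expression for $\text{lim}(G)$ is the limit over $i$ of the corresponding expressions for $G(i)$.

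Given this, full faithfulness of $\text{lim}(\eta)$ is immediate. For $X,Y\in\text{lim}(F_0)$, the induced map on mapping spaces is identified with $\lim_{i}\big(\text{Map}_{F_0(i)}(X_i,Y_i)\to\text{Map}_{F_1(i)}(\eta_iX_i,\eta_iY_i)\big)$. Each map in the diagram is an equivalence because $\eta_i$ is fully faithful, and a limit of a natural transformation that is a pointwise equivalence is an equivalence; hence $\text{lim}(\eta)$ is fully faithful.

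For the essential image statement, the ``only if'' direction is formal: if $X\cong\text{lim}(\eta)(W)$ then $X_i\cong\eta_i(W_i)$, so each $X_i$ lies in the essential image of $\eta_i$. For the converse I would pass to the cartesian fibrations. The natural transformation $\eta$ induces a map $g:\intsmall F_0\to\intsmall F_1$ over $I^{\text{op}}$ which preserves cartesian edges and is fiberwise fully faithful. Let $\mathcal{E}_i\subseteq F_1(i)$ be the essential image of $\eta_i$; since $F_1(\phi)\eta_i\simeq\eta_jF_0(\phi)$ for $\phi:i\to j$, the functors $F_1(\phi)$ carry $\mathcal{E}_i$ into $\mathcal{E}_j$, so the $\mathcal{E}_i$ assemble into a subfunctor $\mathcal{E}\subseteq F_1$, classified by the full subcategory $\intsmall\mathcal{E}\subseteq\intsmall F_1$ spanned by the objects lying in some $\mathcal{E}_i$. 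Because $g$ preserves cartesian edges, it factors through an equivalence $\intsmall F_0\xrightarrow{\sim}\intsmall\mathcal{E}$ of cartesian fibrations over $I^{\text{op}}$ (a fiberwise equivalence of cartesian fibrations is an equivalence), and so $\text{lim}(F_0)\xrightarrow{\sim}\text{lim}(\mathcal{E})$ with the essential image of $\text{lim}(\eta)$ equal to that of $\text{lim}(\mathcal{E}\hookrightarrow F_1)$. It therefore remains to show that a cartesian section $\sigma$ of $\intsmall F_1\to I^{\text{op}}$ (i.e.\ an object $X\in\text{lim}(F_1)$) lifts to $\text{lim}(\mathcal{E})$ whenever each value $X_i=\sigma(i)$ lies in $\mathcal{E}_i$. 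This holds because $\intsmall\mathcal{E}\subseteq\intsmall F_1$ is a full subcategory stable under the cartesian pullback functors, so an $F_1$-cartesian edge with both endpoints in $\intsmall\mathcal{E}$ is automatically $\mathcal{E}$-cartesian; hence such a $\sigma$ is already a cartesian section of $\intsmall\mathcal{E}$ and defines the desired lift.

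The one genuinely delicate point is the claim that the fiberwise essential images organize into an honest subfunctor $\mathcal{E}$ of $F_1$, equivalently that $\intsmall\mathcal{E}$ is a cartesian subfibration for which $g$ becomes a fiberwise equivalence. The content here is the interaction between ``taking essential images'' and the transition functors of the fibration, and this is exactly where the coherence of $\eta$ as a genuine natural transformation (rather than merely a pointwise collection of fully faithful functors) is used, via the homotopies $F_1(\phi)\eta_i\simeq\eta_jF_0(\phi)$. Everything else is formal manipulation of limits, mapping spaces, and the cartesian fibration model for $\text{lim}$ (\cite{HTT} 3.3.3.2).
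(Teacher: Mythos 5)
Your proof is correct, but it reaches the two conclusions by a partly different route than the paper. For full faithfulness the paper never computes mapping spaces: it asserts that the induced map $\eta_\ast:\smallint F_0\to\smallint F_1$ of total spaces of the associated cocartesian fibrations is fully faithful, identifies $\text{lim}(F_i)$ with the marked section categories $\text{Fun}^\dagger_{/I}(I^\sharp,\smallint F_i^\natural)$ via \cite{HTT} 3.3.3.2, and observes that a fully faithful map induces a fully faithful map of section categories. Your direct computation $\text{Map}_{\text{lim}(G)}(X,Y)\cong\lim_i\text{Map}_{G(i)}(X_i,Y_i)$ via the cotensor $\text{Fun}(\Delta^1,-)$ is more elementary and self-contained; what it buys is independence from the Grothendieck construction for this half (and it implicitly sidesteps the claim ``fiberwise fully faithful implies $\eta_\ast$ fully faithful,'' which the paper also leaves unproved), at the cost of needing the identification to be natural in $\eta$, which holds by construction. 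For the essential image you and the paper use the same model, namely (co)cartesian sections of the Grothendieck construction --- your use of cartesian fibrations over $I^{\text{op}}$ where the paper uses cocartesian fibrations over $I$ is a purely cosmetic dualization --- but the paper's argument is a one-liner (``a section $s$ lifts if and only if $s(i)\in F_0(i)\subseteq F_1(i)$ for all $i$'') that silently treats each $F_0(i)$ as a full subcategory of $F_1(i)$ and glosses exactly the point you flag as delicate: that the fiberwise essential images assemble into a cartesian subfibration $\intsmall\mathcal{E}$, that an ambient-cartesian edge with endpoints in $\intsmall\mathcal{E}$ remains cartesian there, and that a fiberwise equivalence of cartesian fibrations preserving cartesian edges is an equivalence. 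Your device of defining $\mathcal{E}$ as a full subcategory of the total space, rather than coherently fiber by fiber, is exactly the right way to avoid a coherence problem, so your write-up is, if anything, more rigorous than the original; the paper buys brevity by leaning on these standard fibration facts.
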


\begin{proof}
Consider the associated diagram $$\xymatrix{
\smallint F_0^\natural\ar[rr]^{\eta_\ast}\ar[rd]_{p_0} &&\smallint F_1^\natural\ar[ld]^{p_1} \\
&I, &
}$$ where $p_i$ are cocartesian fibrations. Since $\eta_i$ is fully faithful for each $i$, so is $\eta_\ast$. By \cite{HTT} 3.3.3.2, $\text{lim}(F_i)\cong\text{Fun}_{/I}^\dagger(I^\sharp,\smallint F_i^\natural)$. Therefore, $\text{lim}(\eta)$ is equivalent to $$\text{Fun}_{/I}^\dagger(I^\sharp,\smallint F_0^\natural)\to\text{Fun}_{/I}^\dagger(I^\sharp,\smallint F_1^\natural),$$ which is fully faithful as desired.

The maps $\text{lim}(F_1)\to F_1(i)$ are given by evaluating a section at $i\in I$. Therefore, a section $s:I\to\smallint F_1$ lifts to $\smallint F_0$ if and only if $s(i)\in F_1(i)$ is in $F_0(i)\subseteq F_1(i)$ for all $i$, which proves the second part of the lemma.
\end{proof}

\begin{proof}[Proof of Theorem \ref{ThmLaxCat}]
First we prove the theorem for lax colimits. By definition, $\text{colim}^\text{lax}(F)$ is the colimit of $$F^\text{lax}_c:\text{Tw}(I)\to I^\text{op}\times I\xrightarrow{p}\text{Cat},$$ where $p(i,j)=|I_{i/}^\dagger|\times F(j)$. Since $|-|:\text{Cat}^\dagger\to\text{Cat}$ preserves finite products, $p(i,j)\cong|I_{i/}^\dagger\times F(j)^\flat|$, and therefore $F^\text{lax}_c(-)\cong|\bar{F}^\text{lax}_c(-)|$, where $$\bar{F}^\text{lax}_c:\text{Tw}(I)\to I^\text{op}\times I\xrightarrow{\bar{p}}\text{Cat}^\dag$$ and $\bar{p}(i,j)=I_{i/}^\dagger\times F(j)^\flat$. By Proposition \ref{PropMarkLim}, $\text{colim}(\bar{F}^\text{lax}_c)$ is the \emph{fully} lax colimit of $F$, which is the Grothendieck construction $\smallint F$ by Remark \ref{RmkFully}, and a morphism of $\smallint F$ is marked if and only if it is in the image of a marked morphism under $I^\dagger_{i/}\times F(j)^\flat\to\smallint F$ for some $j\to i$ in $\text{Tw}(I)$. These are the morphisms which are cocartesian over marked morphisms of $I$.

Therefore, $\text{colim}(\bar{F}^\text{lax}_c)\cong\smallint F^\dagger$. Since $|-|:\text{Cat}^\dagger\to\text{Cat}$ has a right adjoint, it preserves colimits, so $$\text{colim}^\text{lax}(F)=\text{colim}(F^\text{lax}_C)\cong|\text{colim}(\bar{F}^\text{lax}_C)|\cong|\smallint F^\dagger|.$$ The proof for oplax colimits is exactly the same. Now we turn to lax limits.

By definition, $\text{lim}^\text{lax}$ is the limit of the composite $$F^\text{lax}_\ell:\text{Tw}(I)\to I^\text{op}\times I\xrightarrow{q}\text{Cat},$$ where $q(i,j)=\text{Fun}(|I_{/i}^\dagger|,F(j))\cong\text{Fun}^\dagger(I_{/i}^\dagger,F(j)^\flat)$. In particular, consider the composite $$\bar{F}^\text{lax}_\ell:\text{Tw}(I)\to I^\text{op}\times I\xrightarrow{\bar{q}}\text{Cat}$$ where $\bar{q}(i,j)=\text{Fun}(I_{/i},F(j))$. Then there is a natural transformation $F^\text{lax}_\ell\to\bar{F}^\text{lax}_\ell$ which is a full subcategory inclusion at each $j\to i$ in $\text{Tw}(I)$. By the lemma, therefore $\text{lim}^\text{lax}(F)=\text{lim}(F^\text{lax}_\ell)$ is a full subcategory of $\text{lim}(\bar{F}^\text{lax}_\ell)$, which is the \emph{fully} lax limit. By Remark \ref{RmkFully}, the fully lax limit is the $\infty$-category of sections of $t:\smallint F\to I$, so we conclude by the lemma that $$\text{lim}^\text{lax}(F)\subseteq\text{Fun}_{/I}(I,\smallint F),$$ and a section is in $\text{lim}^\text{lax}(F)$ if and only if it sends marked morphisms to $t$-cocartesian morphisms.
\end{proof}

\section{Examples}
\noindent We will end with two examples of partially lax limits.

\subsection{Example: enriched $\infty$-categories}
\noindent Let $\mathcal{V}$ be a monoidal $\infty$-category. For each set $S$, Gepner-Haugseng \cite{GH} construct an $\infty$-operad $\mathcal{O}_S$ and define: A $\mathcal{V}$-enriched category with set $S$ of objects is an $\text{Assoc}_S$-algebra in $\mathcal{V}$. Then we may write $\text{Cat}^\mathcal{V}_S=\text{Alg}_{\mathcal{O}_S}(\mathcal{V})$ for the $\infty$-category of $\mathcal{V}$-enriched categories with set $S$ of objects. This construction is functorial in $S$, $$\text{Cat}^\mathcal{V}_{-}:\text{Set}^\text{op}\to\widehat{\text{Cat}},$$ and the $\infty$-category of \emph{all} $\mathcal{V}$-enriched categories is defined to be a localization of the associated cartesian fibration (\cite{GH} 5.4.3). By Theorem \ref{ThmLaxCat}, $\text{Cat}^\mathcal{V}$ can be described as an oplax colimit:

\begin{definition}
The $\infty$-category of $\mathcal{V}$-enriched categories is the oplax colimit $$\text{Cat}^\mathcal{V}\cong\text{colim}^\text{oplax}(\text{Cat}^\mathcal{V}_{-}),$$ where $\text{Set}^\text{op}$ is marked by the surjections.
\end{definition}

\noindent We will briefly motivate this definition. If $\mathcal{C}$ is an enriched category with set $T$ of objects, and $f:S\to T$ is a function, there is an enriched category $f^\ast\mathcal{C}$, which is determined essentially uniquely by the property: $f^\ast\mathcal{C}$ has set $S$ of objects, and there is a fully faithful functor $\alpha_\mathcal{C}:f^\ast\mathcal{C}\to\mathcal{C}$ which acts as $f:S\to T$ on the sets of objects. That is, there are triangles $$\xymatrix{
\text{Cat}^\mathcal{V}_T\ar[dd]_{f^\ast}\ar[rd] &\\
&\text{Cat}^\mathcal{V}, \\
\text{Cat}^\mathcal{V}_S\ar[ru] &
}$$ with natural transformations $\alpha$ going \emph{up} the triangle. Moreover, if $f$ is surjective, then each $\alpha_\mathcal{C}$ is fully faithful and essentially surjective, so we expect $\alpha$ to be a natural equivalence when $f$ is surjective. In other words, $\text{Cat}^\mathcal{V}$ should satisfy the same universal property as an oplax colimit with respect to the surjective marking on Set.

This will be described in greater detail in our upcoming work on enriched $\infty$-categories \cite{Berman1}.

\subsection{Example: algebras over $\infty$-operads}
\noindent Let $\text{Fin}_\ast$ denote the category of finite pointed sets. We may think of a symmetric monoidal $\infty$-category $\mathcal{C}^\otimes$ as a functor $\mathcal{C}:\text{Fin}_\ast\to\text{Cat}_\infty$ such that the maps $\mathcal{C}\left\langle n\right\rangle\to\mathcal{C}=\mathcal{C}\left\langle 1\right\rangle$ exhibit $\mathcal{C}\left\langle n\right\rangle\cong\mathcal{C}^{\times n}$. This is the same property as Segal's $\Gamma$-spaces; see  \cite{HA} 2.4.2.2.

\begin{proposition}\label{PropOp}
If $\mathcal{O}$ is an $\infty$-operad and $\mathcal{C}$ is a symmetric monoidal $\infty$-category, then the $\infty$-category of $\mathcal{O}$-algebras in $\mathcal{C}$ is the lax limit of the composite $$\text{Alg}_\mathcal{O}(\mathcal{C})\cong\text{lim}^\text{lax}(\mathcal{O}\to\text{Fin}_\ast\xrightarrow{\mathcal{C}}\text{Cat}_\infty),$$ where $\mathcal{O}$ is marked by the \emph{inert morphisms} (defined \cite{HA} 2.1.2.3).
\end{proposition}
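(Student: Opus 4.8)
The plan is to apply Theorem \ref{ThmLaxCat} to the functor $F=\mathcal{C}\circ g$, where $g:\mathcal{O}\to\text{Fin}_\ast$ is the structure map of the operad, and then to identify the resulting $\infty$-category of marked sections with Lurie's description of $\text{Alg}_\mathcal{O}(\mathcal{C})$ as the $\infty$-category of $\infty$-operad maps $\mathcal{O}\to\mathcal{C}^\otimes$. The whole argument is thus a translation: Theorem \ref{ThmLaxCat} already packages the lax limit as marked sections of a Grothendieck construction, and the work is to recognize that Grothendieck construction as $\mathcal{C}^\otimes$ and the marking condition as inert-preservation.

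First I would identify the Grothendieck construction $\smallint\mathcal{C}$ with $\mathcal{C}^\otimes$. Because $\mathcal{C}$ is symmetric monoidal, $\mathcal{C}^\otimes\to\text{Fin}_\ast$ is an honest cocartesian fibration, and its straightening is exactly the Segal-type functor $\mathcal{C}:\text{Fin}_\ast\to\text{Cat}_\infty$ with $\mathcal{C}\langle n\rangle\cong\mathcal{C}^{\times n}$ (\cite{HA} 2.4.2.2); unstraightening then gives $\smallint\mathcal{C}\cong\mathcal{C}^\otimes$ over $\text{Fin}_\ast$. This is the one step where the full symmetric monoidal hypothesis, rather than merely an $\infty$-operad, is essential: only then is the straightening defined on all of $\text{Fin}_\ast$. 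Marking $\text{Fin}_\ast$ by its inert morphisms makes $g:\mathcal{O}^\dag\to\text{Fin}_\ast^\dag$ a marked functor (inert morphisms of $\mathcal{O}$ lie over inert morphisms of $\text{Fin}_\ast$ by \cite{HA} 2.1.2.3), so the pullback square following Definition \ref{DefIM}, applied to $t=g$ and $F=\mathcal{C}$, yields an equivalence of marked $\infty$-categories $$\smallint(\mathcal{C}\circ g)^\dag\cong\mathcal{O}^\dag\times_{\text{Fin}_\ast}\mathcal{C}^{\otimes\dag}.$$ Under this equivalence the induced marking consists of the morphisms that are $p$-cocartesian for the projection $p:\smallint(\mathcal{C}\circ g)\to\mathcal{O}$ and lie over an inert morphism of $\mathcal{O}$.

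Next I would run Theorem \ref{ThmLaxCat}, which presents $\text{lim}^\text{lax}(\mathcal{C}\circ g)$ as $\text{Fun}^\dag_{/\mathcal{O}}(\mathcal{O}^\dag,\smallint(\mathcal{C}\circ g)^\dag)$, the $\infty$-category of marked sections of $p$. Using the pullback description, a section of $p$ is the same datum as a functor $A:\mathcal{O}\to\mathcal{C}^\otimes$ over $\text{Fin}_\ast$, i.e. $\text{Fun}_{/\mathcal{O}}(\mathcal{O},\mathcal{O}\times_{\text{Fin}_\ast}\mathcal{C}^\otimes)\cong\text{Fun}_{/\text{Fin}_\ast}(\mathcal{O},\mathcal{C}^\otimes)$, an equivalence of $\infty$-categories. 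It then remains to match the two full subcategories. A morphism of the pullback is $p$-cocartesian exactly when its $\mathcal{C}^\otimes$-component is cocartesian over $\text{Fin}_\ast$; since an inert morphism of $\mathcal{O}$ maps under $g$ to an inert morphism of $\text{Fin}_\ast$, the section $A$ sends inert morphisms of $\mathcal{O}$ to $p$-cocartesian morphisms over inert morphisms precisely when $A$ carries inert morphisms of $\mathcal{O}$ to inert (cocartesian-over-inert) morphisms of $\mathcal{C}^\otimes$. This is exactly Lurie's condition for $A$ to be a map of $\infty$-operads, so the marked sections are exactly the objects of $\text{Alg}_\mathcal{O}(\mathcal{C})$, and since both sides sit as full subcategories of equivalent functor categories this upgrades to the desired equivalence.

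The main obstacle is the bookkeeping of the word \emph{inert} across the three fibrations $\mathcal{O}\to\text{Fin}_\ast$, $\mathcal{C}^\otimes\to\text{Fin}_\ast$, and $\smallint(\mathcal{C}\circ g)\to\mathcal{O}$, together with verifying that the induced marking of Definition \ref{DefIM} corresponds on the nose to the cocartesian-over-inert condition defining a map of operads. All the genuinely homotopical content (cofinality of $\text{Tw}(I)\to I$, the interaction of $|-|$ and marked sections) is already absorbed into Theorem \ref{ThmLaxCat}; what must be checked by hand is only the stability of cocartesian morphisms under base change along $g$ and the identification $\smallint\mathcal{C}\cong\mathcal{C}^\otimes$, both of which are standard but deserve to be stated carefully so that the marking matches.
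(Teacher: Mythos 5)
Your proposal is correct and is essentially the paper's own argument: the paper likewise proves this by citing Lurie's definition $\text{Alg}_\mathcal{O}(\mathcal{C})\cong\text{Fun}^\dagger_{/\text{Fin}_\ast}(\mathcal{O}^\mathsection,\smallint\mathcal{C})$ (\cite{HA} 2.1.2.7) and then invoking Theorem \ref{ThmLaxCat}. The only difference is that you spell out the translation the paper leaves implicit --- the identification $\smallint\mathcal{C}\cong\mathcal{C}^\otimes$, the base-change of markings via the pullback square after Definition \ref{DefIM}, and the matching of the marked-section condition with inert-preservation --- all of which is correct bookkeeping.
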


\begin{proof}
By definition (\cite{HA} 2.1.2.7), $\text{Alg}_\mathcal{O}(\mathcal{C})\cong\text{Fun}^\dagger_{/\text{Fin}_\ast}(\mathcal{O}^\mathsection,\smallint\mathcal{C})$. Here $^\mathsection$ denotes the marking by inert morphisms, and $\smallint\mathcal{C}\to\text{Fin}_\ast$ is the cocartesian fibration associated to $\text{Fin}_\ast\xrightarrow{\mathcal{C}}\to\text{Cat}_\infty$. (Lurie writes $\mathcal{C}^\otimes$ instead of $\smallint\mathcal{C}$.) By Theorem \ref{ThmLaxCat}, this is equivalent to the lax limit described.
\end{proof}

%\noindent This description of $\text{Alg}_\mathcal{O}(\mathcal{C})$ leads to a very direct proof of:

%\begin{proposition}[\cite{HA} 3.2.3.5(2)]\label{PropPrL}
%If $\mathcal{C}$ is a presentable, closed symmetric monoidal $\infty$-category, and $\mathcal{O}$ is any $\infty$-operad, then $\text{Alg}_\mathcal{O}(\mathcal{C})$ is presentable.
%\end{proposition}

%\begin{proof}
%The proposition is a direct consequence of Proposition \ref{PropOp} along with the following lemma, which asserts that lax limits of presentable $\infty$-categories are presentable.
%\end{proof}

%\begin{lemma}
%Regard $\text{Pr}^L$ as cotensored over $\text{Cat}_\infty$ via $\text{Fun}(-,-):\text{Cat}_\infty^\text{op}\times\text{Pr}^L\to\text{Pr}^L$. (This is a functor by \cite{HTT} 5.5.3.6.) Then $\text{Pr}^L$ admits lax limits over any small marked $\infty$-category, and they are preserved by the inclusion $\text{Pr}^L\subseteq\widehat{\text{Cat}}_\infty$.
%\end{lemma}

%\begin{proof}
%Since $\text{Pr}^L$ admits small limits (\cite{HTT} 5.5.3.13), and lax limits are examples of limits, so it admits small lax limits.

%The inclusion $\text{Pr}^L\subseteq\widehat{\text{Cat}}_\infty$ preserves all small limits (also by \cite{HTT} 5.5.3.13) and is compatible with the cotensoring. Therefore, it preserves lax limits.
%\end{proof}


\begin{thebibliography}{9}
\bibitem{BarwickK} C. Barwick. On the algebraic K-theory of higher categories. \textit{Journal of Topology} 9:245-347 (2016).
\bibitem{TwAr} C. Barwick. Spectral Mackey functors and equivariant algebraic K-theory. \textit{Adv. Math.} 304(2): 646-727 (2017).
%\bibitem{BarwickShah} C. Barwick and J. Shah. Fibrations in $\infty$-category theory. \textit{MATRIX Annals} 17-42 (2016).
\bibitem{Berman1} J. Berman. Enriched $\infty$-categories I: enriched presheaves. In preparation.
\bibitem{DDS} M.E. Descotte, E.J. Dubuc, and M. Szyld. Sigma limits in 2-categories and flat pseudofunctors. \textit{Adv. Math.}, 333: 266-313 (2018).
\bibitem{GH} D. Gepner and R. Haugseng. Enriched $\infty$-categories via non-symmetric $\infty$-operads. \textit{Adv. Math.}, 279: 575-716 (2015).
\bibitem{GHN} D. Gepner, R. Haugseng, and T. Nikolaus. Lax colimits and free fibrations in $\infty$-categories. \textit{Documenta Mathematica}, 22: 1225-1266 (2017).
\bibitem{Glasman} S. Glasman. A spectrum-level Hodge filtration on topological Hochschild homology. \textit{Selecta Mathematica}, 22: 1583-1612 (2016).
\bibitem{HTT} J. Lurie. Higher Topos Theory. \textit{Ann. of Math. Stud.}, 170, Princeton University Press. Princeton, NJ (2009).
\bibitem{HA} J. Lurie. Higher Algebra. http://www.math.harvard.edu/$\sim$lurie/papers/HA.pdf (accessed 10/13/2019). Preprint.
\end{thebibliography}
\end{document}